\documentclass[12pt]{article}
 \usepackage{graphics}
\usepackage{graphicx}
\usepackage{amssymb}
\usepackage{url}
\usepackage{amsthm}

\theoremstyle{plain}
\newtheorem{theorem}{Theorem}
\newtheorem{lemma}[theorem]{Lemma}

\theoremstyle{definition}

\theoremstyle{remark}

\begin{document}

\title{Irreducible triangulations of the M\"{o}bius band}

\author{Mar\'{\i}a-Jos\'e Ch\'avez\\
        Departamento de Matem\'atica Aplicada I,\\
       Universidad de Sevilla, Spain\\
       mjchavez@us.es\\
\and Serge Lawrencenko\\
      Faculty of Control and Design\\
    Russian State University of Tourism and Service\\
    Lyubertsy, Moscow Region 140000, Russia\\
    lawrencenko@hotmail.com\\
 \and Antonio Quintero\\
      Departamento de Geometr\'{\i}a y Topolog\'{\i}a\\
      Universidad de Sevilla,  Spain\\
      quintero@us.es\\
 \and
    Mar\'{\i}a Trinidad Villar\\
    Departamento de Geometr\'{\i}a y Topolog\'{\i}a\\
  Universidad de Sevilla,  Spain\\
    villar@us.es\\
}

\maketitle

This work has been partially supported by PAI FQM-164; PAI FQM-189;
MTM 2010-20445.

\begin{abstract}
A complete list of irreducible triangulations is identified on the M\"{o}bius band.

\end{abstract}

{\bf Keywords:} triangulation of surface,  irreducible triangulation,  M\"{o}bius band.

{\bf MSC[2010]:} 05C10 (Primary),  57M20,  57N05 (Secondary).

\section{Introduction}\label{}
Let $S\in\{S_g, N_k\}$ be the closed orientable surface $S_g$ of
genus $g$ or the closed non-orientable surface $N_k$  of
non-orientable  genus $k$. In particular, $S_0$ is the sphere and
$N_1$ is the projective plane. Let $D$ be an open disk in $S$, so
that the boundary $\partial(S-D)$ (=$\partial D$) is homeomorphic to
a circle. In particular, $S_0-D$ is the disk and $N_1-D$ is the
M\"{o}bius band. We use the notation $\Sigma$ whenever we assume the
general case: $\Sigma\in\{S, S-D\}$ .

If a graph $G$ is 2-cell embedded in $\Sigma$, the components of
$\Sigma -G$  are called \textit{faces}. A \textit{triangulation} of
$\Sigma$ with a simple graph $G$ (without loops or multiple edges)
is a 2-cell embedding $T : G \rightarrow \Sigma$  in which each face
is bounded by a 3-cycle (that is, a cycle of length 3) of $G$  and
any two faces are either disjoint, share a single vertex, or share a
single edge. We denote by $V=V(T)$, $E=E(T)$, and $F=F(T)$  the sets
of vertices, edges, and faces of $T$, respectively. The cardinality
$|V(T)|$ is called the \textit{order} of $T$. By $G(T)$  we denote
the graph $(V(T), E(T))$  of triangulation $T$. Two triangulations
$T_1$ and $T_2$ are called \textit{isomorphic} if there is a
bijection, called an \textit{isomorphism}, $\varphi : V(T_1)
\rightarrow V(T_2)$, such that $uvw\in F(T_1)$  if and only if
$\varphi(u)\varphi(v)\varphi(w)\in F(T_2)$. Throughout this paper we
distinguish triangulations only up to isomorphism. For $\Sigma = S-D
$, let $\partial T$  ($=\partial D$) denote the boundary cycle of
$T$. The vertices and edges of $\partial T$ are called
\textit{boundary vertices} and \textit{boundary edges} of $T$.

A triangulation is called \textit{irreducible} if no edge can be shrunk
without producing multiple edges or changing the topological type of
the surface. The term ``irreducible triangulation'' is more accurately
introduced in Section 2. The irreducible triangulations of $\Sigma$ form a
basis for the family of all triangulations of $\Sigma$, in the sense that any
triangulation of $\Sigma$  can be obtained from a member of the basis by
repeatedly applying the \textit{splitting} operation (introduced in Section 2)
a finite number of times. Barnette and Edelson \cite{BE1989} and
independently Negami \cite{N2001} have proved that for every closed
surface $S$ the basis of irreducible triangulations is finite. At
present such bases are known for seven closed surfaces: the sphere
(Steinitz and Rademacher \cite{SR1976}), projective plane (Barnette
\cite{B1982}), torus (Lawrencenko \cite{L1987}), Klein bottle (25
Lawrencenko and Negami's \cite{LN1997} triangulations plus 4 more
irreducible triangulations found later by Sulanke \cite{S2006}) as
well as $S_2$,  $N_3$, and  $N_4$  (Sulanke \cite{S2006-2,
S2006-4}). Boulch, Colin de Verdi\`{e}re, and Nakamoto
\cite{BCN2011} have established upper bounds on the order of an
irreducible triangulation of $S - D$. In this paper we obtain a
complete list of irreducible triangulations of $N_1 - D$.

\section{Preliminaries}

Let $T$ be a triangulation of $\Sigma$. An unordered pair of
distinct adjacent edges $vu$  and $vw$ of $T$ is called a
\textit{corner} of $T$  at vertex $v$, denoted by $\langle u, v, w
\rangle$. The \textit{splitting} of a corner $\langle u, v, w
\rangle$, denoted by sp$\langle u, v, w \rangle$, is the operation
which consists of cutting $T$ open along the edges $vu$ and $vw$ and
then closing the resulting hole with two new triangular faces,
$v'v''u$ and $v'v''w$, where $v'$ and $v''$ denote the two images of
$v$ appearing as a result of cutting. Under this operation, vertex
$v$ is extended to the edge $v'v''$ and the two faces having this
edge in common are inserted into the triangulation. Especially in
the case $\{\Sigma= S -D \wedge uv\in E(T) \wedge v\in V(\partial
T)\}$, the operation sp$\langle u, v] $ of \textit{splitting a
truncated corner} $\langle u, v] $ produces a single triangular face
$uv'v''$, where $v'v''\in E(\partial ({\rm{sp}} \langle u, v](T)))$.

Under the
inverse operation, \textit{shrinking} the edge $v'v''$, denoted by
sh$\rangle v'v''\langle $, this edge collapses to a single vertex
$v$, the faces $v'v''u$  and $v'v''w$ collapse to the edges $vu$ and
$vw$, respectively. Therefore sh$\rangle v'v''\langle \circ$
sp$\langle u, v, w \rangle (T)=T$. It should be noticed that in the
case $\{\Sigma= S -D \wedge v'v''\in E(\partial T)\}$, there is only
one face incident with $v'v''$, and only that single face collapses
to an edge under sh$\rangle v'v''\langle $. Clearly, the
 operation of splitting doesn't change the topological type of $\Sigma$.
 We demand that the shrinking operation must preserve the topological type of $\Sigma$  as well; moreover,
 multiple edges must not be created in a triangulation. A 3-cycle  of $T$ is called \textit{nonfacial} if it
doesn't bound a face of $T$. In the case in which an edge $e\in
E(T)$ occurs in some nonfacial 3-cycle, if we still insist on
shrinking $e$, multiple edges would be produced, which would expel
sh$\rangle e\langle (T)$  from the class of triangulations. An edge
$e$ is called \textit{shrinkable} or a \textit{cable} if sh$\rangle
e\langle (T)$ is still a triangulation of $\Sigma$; otherwise the
edge is called \textit{unshrinkable} or a \textit{rod}. The subgraph
of $G(T)$  made up of all cables is called the
\textit{cable-subgraph }of $G(T)$.

The only impediment to edge shrinkability in a triangulation $T$ of a
closed surface  $S$ is identified in \cite{B1982, BE1989, L1987}: an edge $e\in E(T)$
is a rod if and only if $e$ satisfies the following condition:

(2.1) $e$   is in a nonfacial 3-cycle of $G(T)$.

The impediments to edge shrinkability in a triangulation $T$ of a punctured surface $S- D$ are
identified in \cite{BCN2011}: an edge $e\in E(T)$ is a rod if and only if $e$  satisfies either
condition (2.1) or the following condition:

(2.2)  $e$ is a \textit{chord } of $D$---that is, the end vertices of
$e$  are in $V(\partial D)$ but $e\notin E(\partial D)$.

A triangulation is said to be \textit{irreducible} if it is free of cables or in other words, each edge is a rod.
For instance, a single triangle is the only irreducible triangulation of the disk $S_0-D$  although its edges don't meet either of conditions (2.1) and (2.2). Thus, we have yet one more impediment to edge shrinkability:

(2.3)  $e$  is a boundary edge in the case the boundary cycle is a 3-cycle.

Although condition (2.3) is a specific case of condition (2.1) (unless $S=S_0$) and is not
explicitly stated in \cite{BCN2011}, it deserves especial mention.

\section{The structure of irreducible punctured triangulations}

In the remainder of this paper we assume that $S \neq S_0$. Let $T$
be an irreducible triangulation of $S-D$. Let us restore the disk
$D$ in $T$, add a  vertex $p$ in $D$ and join $p$ to the
vertices in $\partial D$. We thus obtain a triangulation, $T^*$, of
the closed surface $S$. In this setting we call $D$ the
\textit{patch}, call $p$ the \textit{central vertex of the patch},
and say that $T$ is obtained from the corresponding triangulation
$T^*$ of $S$ by the \textit{patch removal}. Notice that  $T^*$ may
turn out to be an irreducible triangulation of $S$, but not
necessarily.

A vertex of a triangulation $R$ of  $S$ is called a \textit{pylonic vertex} if that
vertex is incident with all cables of $R$. A triangulation that has at
least one cable and at least one pylonic vertex is called a \textit{pylonic
triangulation}. It should be noticed that there exist
triangulations of the torus with exactly one
cable, and thereby with two different pylonic vertices; however, if
a pylonic triangulation $R$ has at least two cables, $R$ has a unique
pylonic vertex.

\begin{lemma}\label{lem1} If $T^*$ has at least two cables, then the central vertex
$p$ of  the patch is the only pylonic vertex of  $T^*$.
\end{lemma}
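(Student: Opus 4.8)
The plan is to prove the slightly stronger statement that \emph{every} cable of $T^*$ is incident with $p$; combined with the observation recorded just before the lemma (a pylonic triangulation with at least two cables has a unique pylonic vertex), this yields the claim at once. So the real task is: if $e\in E(T^*)$ is \emph{not} incident with $p$, then $e$ is a rod of $T^*$.

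First I would record the elementary structural fact that $F(T^*)=F(T)\cup\{pz_iz_{i+1}\}$, where $z_1,\dots ,z_m$ are the vertices of $\partial T=\partial D$ in cyclic order; that is, $T^*$ is obtained from $T$ by pasting a fan of $m$ triangles over the patch, so the only faces of $T^*$ that are not already faces of $T$ are the triangles through $p$. Consequently a $3$-cycle of $G(T^*)$ that avoids $p$ is facial in $T^*$ if and only if it is facial in $T$, while a $3$-cycle of $G(T^*)$ through $p$ is facial in $T^*$ only if it has the form $pz_iz_{i+1}$ with $z_i,z_{i+1}$ consecutive on $\partial D$.

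Now let $e\in E(T^*)$ not be incident with $p$; then $e\in E(T)$, and since $T$ is irreducible, $e$ is a rod of $T$, hence satisfies one of conditions (2.1), (2.2), (2.3). If $e$ lies in a nonfacial $3$-cycle $C$ of $G(T)$ (this covers (2.1), and also (2.3), since when $\partial T$ is a $3$-cycle it cannot bound a face of $T$ for $S\neq S_0$), then $C$ avoids $p$ and is not a face of $T$, so by the structural fact $C$ is a nonfacial $3$-cycle of $G(T^*)$ through $e$; thus $e$ satisfies (2.1) in $T^*$ and is a rod there. If instead $e$ is a chord of $D$, say $e=xy$ with $x,y\in V(\partial D)$, then $x$ and $y$ are non-consecutive on $\partial D$ (as $e\notin E(\partial D)$), so $xpy$ is a $3$-cycle of $G(T^*)$ through $e$ that is not of the form $pz_iz_{i+1}$ and hence is nonfacial; again $e$ satisfies (2.1) in $T^*$ and is a rod. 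In every case $e$ is a rod of $T^*$, so every cable of $T^*$ is incident with $p$, i.e.\ $p$ is a pylonic vertex of $T^*$. Since $T^*$ has at least two cables, $T^*$ is a pylonic triangulation with $\geq 2$ cables, and the uniqueness noted before the lemma (two distinct edges of a simple graph cannot each be incident with the same two vertices) forces $p$ to be its only pylonic vertex.

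The only point requiring a small separate argument is the degenerate instance of (2.3): that the boundary $3$-cycle of $T$ is nonfacial whenever $S\neq S_0$. This is a short topological observation — the unique face meeting all three boundary edges would otherwise be a collar of $\partial T$, forcing $S-D$ to be a disk — and it is in any case implicit in the remark of Section~2 that (2.3) is a specific case of (2.1) unless $S=S_0$. I expect this degenerate case, together with pinning down exactly which $3$-cycles of $G(T^*)$ are facial, to be the only mildly delicate points; the remainder is a routine translation of the irreducibility of $T$ into a statement about $T^*$.
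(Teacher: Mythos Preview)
Your proposal is correct and follows essentially the same route as the paper: both arguments show that every cable of $T^*$ must be incident with $p$ by taking an edge $e$ not incident with $p$, observing $e\in E(T)$ is a rod of $T$, and then case-splitting on conditions (2.1)/(2.2)/(2.3) to exhibit a nonfacial $3$-cycle of $T^*$ through $e$ (with the chord case handled via the cycle $xpy$, exactly as the paper notes). The paper's proof merely phrases the conclusion geometrically (``all cables of $T^*$ have to be entirely in $D\cup\partial D$'' and not in $\partial D$) and leaves the verification as ``it can be easily seen''; your write-up supplies precisely those omitted details, including the observation about which $3$-cycles of $T^*$ are facial and the handling of the degenerate case (2.3).
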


\begin{proof} Using the assumption that $T$ is irreducible and the fact
that each cable of  $T^*$ fails to satisfy condition (2.1), it can be
easily seen that in   the case  $T^*$  is not irreducible, all cables of  $T^*$ have
to be entirely in $D \cup \partial D$ and, moreover, there is no cable that is entirely
in $ \partial D$. In particular, we observe that any chord of $D$  is a rod in  $T$
because it meets condition (2.2), and is also a rod in  $T^*$  because it
meets condition (2.1).
\end{proof}

\section{Irreducible triangulations of the M\"{o}bius band}

Barnette's theorem \cite{B1982} states that there exist two
irreducible triangulations of $N_1$; those are presented in Figure
1: $P_1$ and $P_2$. (For each hexagon identify each antipodal pair
of vertices in the boundary to obtain an actual triangulation of
$N_1$.) By repeatedly applying the splitting operation to  $P_1$
and $P_2$, we can generate all triangulations of $N_1$. Sulanke
\cite{S2005} has generated by computer all triangulations of $N_1$
with up to 19 vertices; in particular, among them there are 20
triangulations with up to 8 vertices. Independently, the authors of
the present paper have identified the same list of 20 triangulations
by hand (Figure 1), using the automorphisms of  $P_1$ and $P_2$. An
\textit{automorphism} of a triangulation  $P$ is an isomorphism of
$P$ with itself. The set of all automorphisms of  $P$ forms a group,
called the \textit{automorphism group} of  $P$ (denoted  ${\rm{Aut}}(P)$).

\begin{figure}[!h]
  \begin{center}
     \includegraphics[width = 12cm]{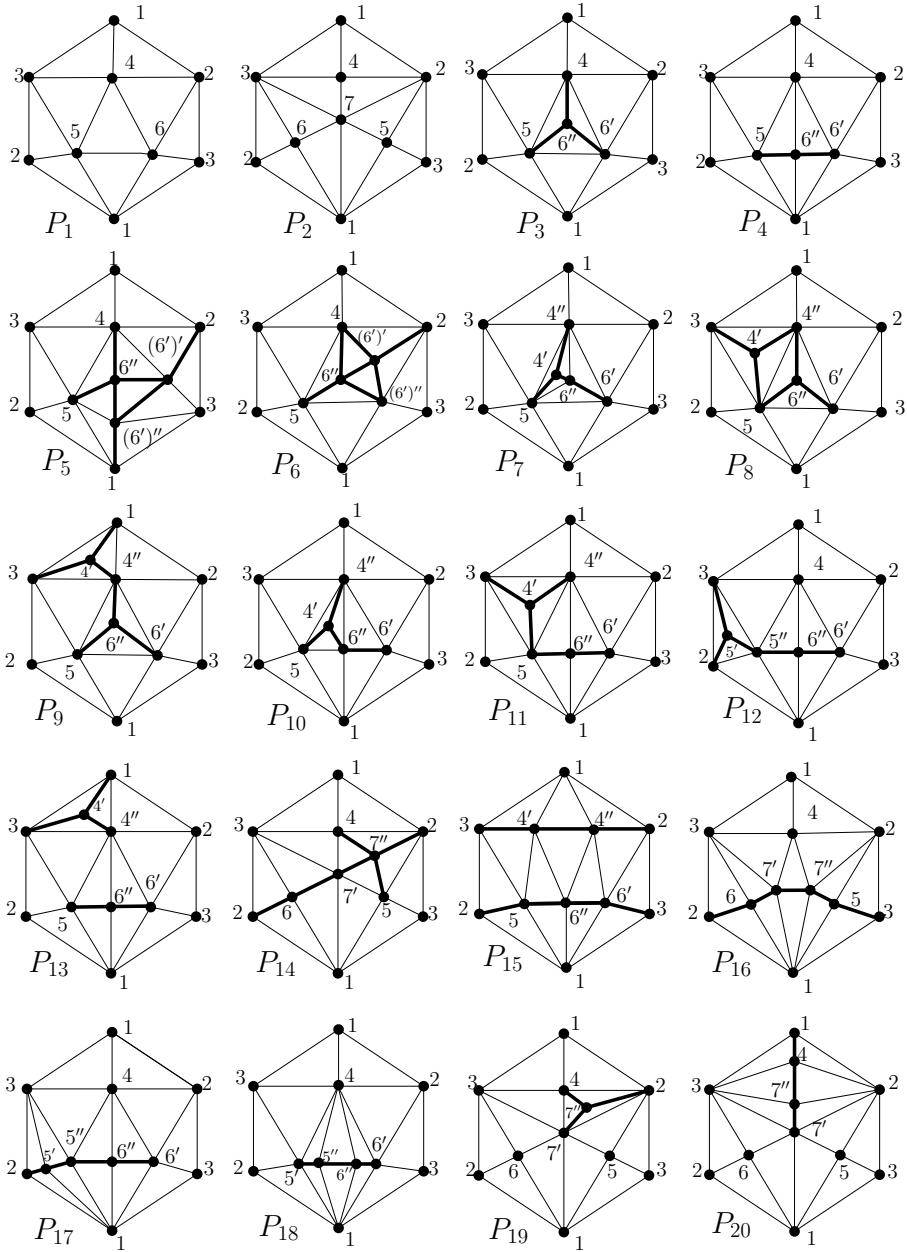}
  \end{center}
  \caption{\label{fig1} All projective plane triangulations with up
  to 8 vertices.}
\end{figure}

\begin{lemma}\label{lem2}{\sc{(\cite{S2005})}}. There are precisely one (up to isomorphism)
triangulation of  $N_1$ with 6 vertices, three with 7 vertices, and
sixteen with 8 vertices. They are shown in Figure \ref{fig1}, in
which the bold edges indicate the cable-subgraphs of the
triangulations.
\begin{flushright}$\Box$
\end{flushright}
\begin{figure}[!h]
  \begin{center}
       \includegraphics[width = 8cm]{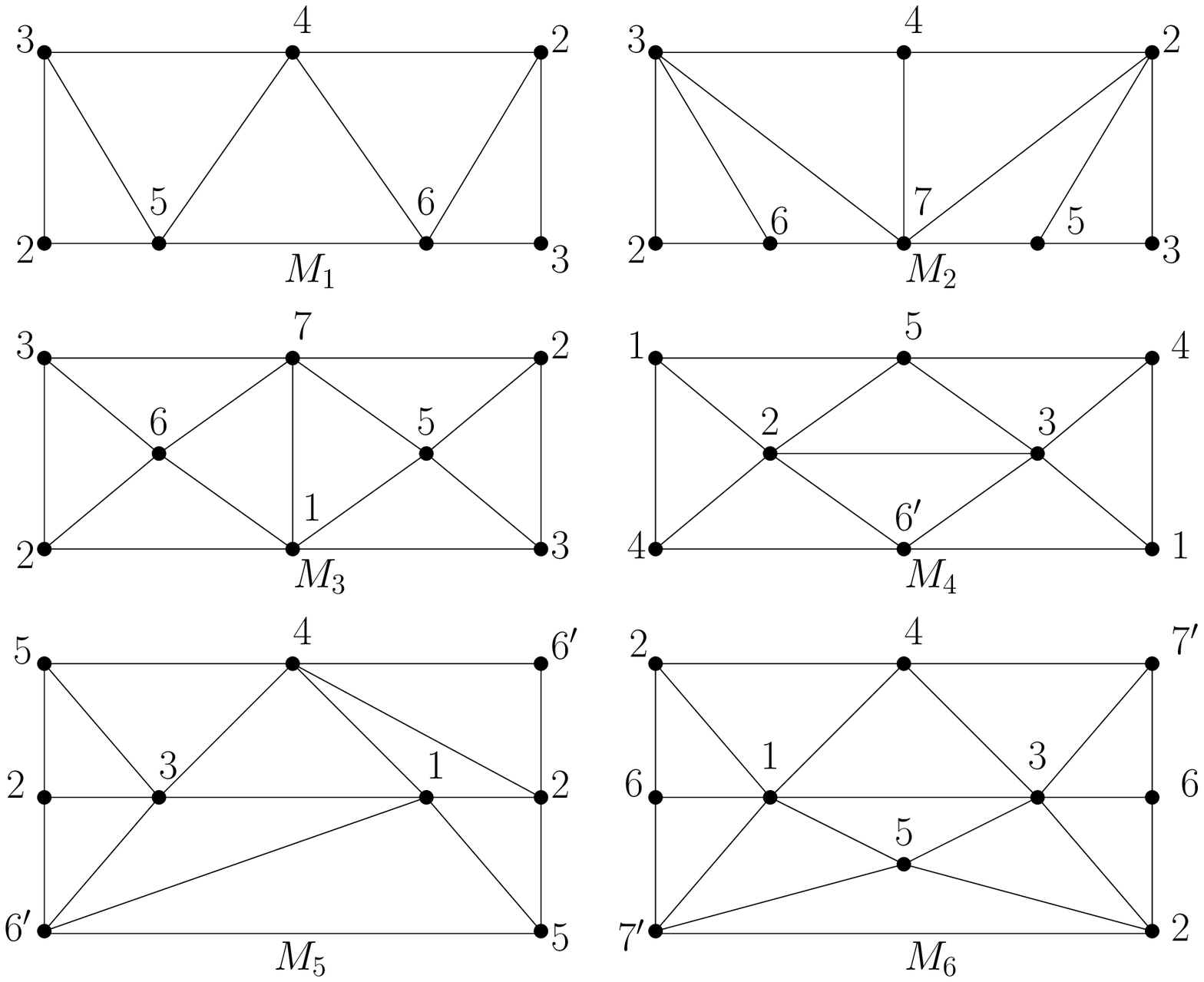}
  \end{center}
  \caption{\label{fig2} Irreducible triangulations of the M\"{o}bius band.}
\end{figure}
\end{lemma}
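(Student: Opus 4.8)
The plan is to bootstrap the classification from Barnette's theorem~\cite{B1982} by iterating the splitting operation. Recall that every triangulation of $N_1$ is obtained from $P_1$ or $P_2$ by repeated splitting, and that a single splitting raises the order by exactly one. The Euler formula for a triangulation of $N_1$ gives $E=3V-3$ and $F=2V-2$; applied to the two pictures in Figure~\ref{fig1} (or read off directly) this shows that one of Barnette's two irreducible triangulations, say $P_1$, has order $6$ and the other, $P_2$, has order $7$. Moreover $N_1$ has no triangulation of order $5$: such a triangulation would need $3\cdot 5-3=12$ edges, but a simple graph on $5$ vertices has at most $10$.

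First I would settle the small cases. Any triangulation $T$ of order $6$ is irreducible, since shrinking an edge of $T$ would produce a triangulation of $N_1$ of order $5$, which does not exist; hence $T$ is one of $P_1,P_2$, and as $P_2$ has order $7$ we get $T=P_1$ --- exactly one triangulation of order $6$. A triangulation of order $7$ is either irreducible, in which case it is $P_2$, or it is $\mathrm{sp}\langle u,v,w\rangle(P_1)$ for some corner of $P_1$. Here I would compute the orbits of $\mathrm{Aut}(P_1)$ on the corners of $P_1$ --- observing that such a corner's three vertices span either a facial triangle or a nonfacial $3$-cycle, and these two types are not mixed by automorphisms --- split a representative of each orbit, and discard isomorphic duplicates; two non-isomorphic triangulations survive, neither equal to $P_2$ (automatic, since $P_2$ is irreducible). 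Together with $P_2$ this yields the three triangulations of order $7$.

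For order $8$, every triangulation is $\mathrm{sp}\langle u,v,w\rangle(T)$ for some corner of one of the three order-$7$ triangulations $T$ found above. I would therefore list, for each such $T$, the orbits of $\mathrm{Aut}(T)$ on the corners of $T$, apply $\mathrm{sp}$ to one representative of each orbit, pool all the results, and sort them up to isomorphism, discarding duplicates; the count that survives is $16$. Finally, for each of the $20$ triangulations obtained I would identify the cable-subgraph by testing every edge against condition~(2.1): an edge is a cable exactly when it lies in no nonfacial $3$-cycle. In particular $P_1$ and $P_2$ have empty cable-subgraphs, consistent with their irreducibility.

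The hard part is the order-$8$ step. Between the three parents there are a good many corner-orbits to process, and the same order-$8$ triangulation is produced from many different corners, so the isomorphism testing must be done carefully enough to guarantee that the final list of $16$ is both complete and free of repetitions, rather than merely plausible. Exploiting $\mathrm{Aut}(P_1)$, $\mathrm{Aut}(P_2)$, and the automorphism groups of the order-$7$ triangulations to cut down the number of corners that actually need to be split --- which is how the authors proceed by hand --- is what makes this tractable without a computer; the alternative is the exhaustive machine search of Sulanke~\cite{S2005} underlying the stated result.
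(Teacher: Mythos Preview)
Your outline is sound and matches the approach the paper itself describes. Note, however, that the paper does not actually give a proof of this lemma: it is stated as a cited result from Sulanke~\cite{S2005} and closed with a $\Box$. The surrounding text records that the authors independently reproduced the list of $20$ triangulations by hand, ``using the automorphisms of $P_1$ and $P_2$,'' which is exactly the strategy you spell out --- bootstrap from Barnette's two irreducibles, use $\mathrm{Aut}$-orbits on corners to cut down the splittings, and eliminate isomorphic duplicates. So your proposal is correct and is precisely the by-hand verification the paper alludes to but does not write out.
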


\begin{theorem} There are precisely six non-isomorphic irreducible
triangulations of the M\"{o}bius band, namely $M_1$  to $M_6$, shown
in Figure 2 in which the left and right sides of each rectangle are
identified with opposite orientation to obtain an actual
triangulation of the M\"{o}bius band.
\end{theorem}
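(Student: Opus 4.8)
The plan is to exploit Lemma~\ref{lem1} together with the classification of small projective-plane triangulations in Lemma~\ref{lem2}. Every irreducible triangulation $T$ of the M\"obius band $N_1-D$ arises, via patch removal, from a triangulation $T^*$ of $N_1$ obtained by inserting a central patch vertex $p$ of degree $d=|\partial D|\ge 3$ and coning it to the boundary cycle. So the strategy is to reverse-engineer: for each triangulation $R$ of $N_1$ in a suitable finite range, examine each vertex $p$ of $R$, delete $p$ and its incident edges, and check whether the resulting punctured triangulation $T=R\setminus p$ is irreducible as a triangulation of $N_1-D$. The two things to pin down are (i) which $R$ can occur as $T^*$, i.e.\ a bound on $|V(R)|$, and (ii) the irreducibility test on $T$, which by Section~2 means: no edge of $T$ lies in a nonfacial $3$-cycle of $G(T)$, no edge is a chord of $D$, and the boundary cycle is not a $3$-cycle (condition (2.3), which here forces $d\ge 4$ unless we argue $d=3$ separately).

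First I would bound the order. If $T^*$ is itself irreducible, then $|V(T^*)|\le 8$ by Barnette's theorem / Lemma~\ref{lem2} (the irreducible triangulations of $N_1$ have $6$, $7$, or $8$ vertices), so $|V(T)|\le 7$. If $T^*$ is not irreducible, Lemma~\ref{lem1} tells us that when $T^*$ has at least two cables the only pylonic vertex is the central vertex $p$, so every cable of $T^*$ is incident with $p$ and lies in $D\cup\partial D$, with no cable entirely in $\partial D$; hence each cable is one of the coning edges $pv$. Shrinking such a cable $pv$ in $T^*$ corresponds exactly to a reduction that stays inside the patch and does not touch $T$, and iterating this contracts the patch down until we reach an irreducible triangulation of $N_1$ (or a single triangle, handled separately), again of order $\le 8$; each shrink decreases $|V(T^*)|$ by one while leaving $|V(T)|$ unchanged except through the boundary. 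The case $T^*$ having exactly one cable is the small residual case and can be checked directly. This gives a concrete finite list of candidate pairs $(R,p)$ with $|V(R)|\le 8$ to inspect, and Lemma~\ref{lem2} plus Sulanke's enumeration supplies all such $R$ with $\le 8$ vertices explicitly (Figure~\ref{fig1}).

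Next, for each candidate $R$ and each orbit of vertices $p$ under $\mathrm{Aut}(R)$ (using the automorphism groups to cut down the casework, exactly as in the hand-enumeration of the $20$ triangulations), I would form $T=R\setminus p$ and run the three-part irreducibility check on $T$'s edges. An edge $uv\in E(T)$ is discarded as reducible unless it is a chord of $\partial T$, or a boundary edge with $\partial T$ a $3$-cycle, or lies on a nonfacial $3$-cycle of $G(T)$; note a $3$-cycle of $G(R)$ through $p$ may become facial or disappear in $T$, and new considerations about whether a cycle is facial in $T$ versus $T^*$ must be tracked near the boundary. The triangulations surviving all checks are the irreducible triangulations of $N_1-D$; I would then sort them up to isomorphism and match them to $M_1,\dots,M_6$ in Figure~\ref{fig2}, finally verifying directly that each $M_i$ is indeed irreducible (the ``at least six'' direction) to close the argument.

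The main obstacle is the bookkeeping in the non-irreducible-$T^*$ case: one must argue carefully that contracting the patch cables never affects the edges of $T$ in a way that changes their shrinkability status, so that irreducibility of $T$ is genuinely controlled by a bounded-order ancestor in $N_1$; and one must handle the small-degree boundary anomalies ($d=3$, i.e.\ condition (2.3), and the single-triangle degenerate case) and the exactly-one-cable case, none of which are covered cleanly by Lemma~\ref{lem1}. The rest is a finite, if tedious, verification driven by Figure~\ref{fig1} and the relevant automorphism groups.
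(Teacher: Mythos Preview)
Your overall strategy---pass to $T^*$, invoke Lemma~\ref{lem1}, and then search among small projective-plane triangulations---matches the paper's. The gap is in the step where you bound $|V(T^*)|$.

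You propose to shrink cables $pv$ of $T^*$ ``inside the patch'' until an irreducible triangulation of order $\le 8$ is reached, asserting that such a shrink ``does not touch $T$''. That is false: shrinking $pv$ merges the boundary vertex $v$ into $p$, removing $v$ from $V(T)$ and collapsing the faces of $T$ incident with $v$. So the backward shrinking process alters $T$ at every step, and arriving at a small irreducible ancestor tells you nothing about the order of the \emph{original} $T^*$. You flag this as ``bookkeeping'' in your last paragraph, but it is the crux of the argument, not a detail, and the approach as written does not yield the bound $|V(R)|\le 8$.

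The paper closes this gap with a forward argument instead. By Lemma~\ref{lem1}, $T^*$ is either irreducible (hence $P_1$ or $P_2$) or pylonic with pylonic vertex $p$, so the problem reduces to classifying \emph{all} pylonic triangulations of $N_1$, of any order. The paper does this via two short claims about the splitting tree rooted at $P_1,P_2$: (a) once a triangulation of $N_1$ has at least two cables and no pylonic vertex, no further splitting can create one; (b) any single splitting applied to $P_3$, $P_4$, or $P_{19}$ destroys their pylonicity. Inspecting Figure~\ref{fig1} then shows that $P_3,P_4,P_{19}$ are the only pylonic triangulations of $N_1$ whatsoever. This yields exactly five candidates for $T^*$; removing the appropriate vertex (up to ${\rm Aut}$-orbit) gives $M_1,\dots,M_6$ directly. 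Note also that your planned irreducibility check on each $R\setminus p$ is unnecessary once the right list of $T^*$ is in hand: if $p$ is pylonic in $R$ (or $R$ is irreducible), every edge of $R\setminus p$ is already a rod in $R$, and the witnessing nonfacial $3$-cycle either avoids $p$ (giving (2.1) in $T$) or passes through $p$ (forcing the edge to be a chord, giving (2.2)).
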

\begin{proof} Observe that in Figure 1 only the following three non-irreducible members have
a pylonic vertex:  $P_3$ and  $P_4$ with pylonic vertex $6''$, and
$P_{19}$ with pylonic vertex $7''$. It can be easily proved that if
a triangulation of  $N_1$ has at least two cables but has no pylonic
vertex, then no pylonic vertex can be created under further
splitting of the triangulation. On the other hand, it can be easily
seen that any one splitting applied to the pylonic triangulations
$P_3$, $P_4$, or $P_{19}$ destroys their pylonicity. Therefore, by
Lemma \ref{lem1}, each irreducible triangulation of  $N_1 -D$ is
obtainable either by removing a vertex from an irreducible
triangulation in $\{ P_1 , P_2 \}$, or by removing the pylonic
vertex from a pylonic triangulation in $\{ P_3 , P_4 , P_{19} \}$.
It is known \cite{CKL2000, CL1998, L1992} that ${\rm{Aut}}(P_1)$ acts
transitively on the vertex set $V(P_1)$, while under the action of
${\rm{Aut}}(P_2)$ the set  $V(P_2)$ breaks into two orbits as follows:
${\rm{orbit}}_1= \{  1,2,3,7 \}$, ${\rm{orbit}}_2=\{  4,5,6 \}$. Therefore, all
irreducible triangulations of $N_1-D$  are covered by the followings:
$M_1=P_1$ minus vertex $1$ (subtructed with the incident edges and faces),
$M_2 =P_2$ minus vertex $1$, $M_3 =P_2$ minus vertex $4$,
$M_4 =P_4$ minus vertex $6''$, $M_5=P_3$ minus vertex $6''$,
$M_6 =P_{19}$  minus vertex $7''$. To see that these triangulations
are pairwise non-isomorphic, observe that they have different vertex
degree sequences except for the pair $\{ M_3 ,M_4 \}$; however, all
boundary vertices have degree 5 in  $M_3$ but not all in $M_4$.
\end{proof}





\end{document}